\newcounter{notes}
\newcommand{\ignore}[1]{}
\newtheorem{theorem}{Theorem}
\newtheorem{corollary}[theorem]{Corollary}
\newtheorem{lemma}[theorem]{Lemma}
\newtheorem{observation}[theorem]{Observation}
\theoremstyle{definition}
\newtheorem{remark}[theorem]{Remark}
\newtheorem{example}[theorem]{Example}
\newtheorem{question}[theorem]{Question}
\newtheorem*{acknowledge}{Acknowledgements}
\newtheoremstyle{theoremwithref}{}{}{\itshape}{}{\bfseries}{.}{.5em}{#1 #2 #3}
\theoremstyle{theoremwithref}
\newcommand{\R}{\mathbb{R}}
\newcommand{\Z}{\mathbb{Z}}
\newcommand{\F}{\mathbb{F}}
\newcommand{\SL}{\mathrm{SL}}
\newcommand{\PSL}{\mathrm{PSL}_2(\mathbb{R})}
\newcommand{\SO}{\mathrm{SO}}
\newcommand{\Hom}{\mathrm{Hom}}
\newcommand{\Aut}{\mathrm{Aut}}
\newcommand{\Out}{\mathrm{Out}}
\newcommand{\Mod}{\mathrm{Mod}}
\newcommand{\Inn}{\mathrm{Inn}}
\renewcommand{\H}{\mathbb{H}}
\newcommand{\HOS}{\mathrm{Homeo}_+(S^1)}
\newcommand{\HOZ}{\mathrm{Homeo}^\Z(\R)}
\newcommand{\rot}{\mathrm{rot}}
\newcommand{\rotild}{\widetilde{\mathrm{ro}}\mathrm{t}}
\newcommand{\eu}{\mathrm{eu}}
\newcommand{\id}{\mathrm{id}}
\DeclareMathOperator{\Homeo}{Homeo}
\DeclareMathOperator{\Diff}{Diff}
\title{Rigidity of mapping class group actions on $S^1$}
\author{Kathryn Mann}
\address{Department of Mathematics, Brown University, 151 Thayer Street, Providence, RI 02912, USA
}
\email{mann@math.brown.edu}
\author{Maxime Wolff}
\address{Sorbonne Universit\'es, UPMC Univ.\ Paris 06, Institut de Math\'ematiques
de Jussieu-Paris Rive Gauche, UMR 7586, CNRS, Univ. Paris Diderot, Sorbonne
Paris Cit\'e, 75005 Paris, France}
\email{maxime.wolff@imj-prg.fr}
\begin{document}

\maketitle
\numberwithin{theorem}{section}

\begin{abstract}
The mapping class group $\Mod_{g, 1}$ of a surface with one marked point can
be identified with an index two subgroup of $\Aut(\pi_1 \Sigma_g)$.
For a surface of genus $g \geq 2$, we show that any action of $\Mod_{g, 1}$
on the circle is either semi-conjugate to its natural action on the Gromov
boundary of $\pi_1 \Sigma_g$, or factors through a finite cyclic group.
For $g \geq 3$, all finite actions are trivial.
This answers a question of~Farb.

\vspace{0.2cm}

\end{abstract}


\section{Introduction} 

Let $\Sigma_g$ be an oriented surface of genus $g \geq 2$, and let $\Gamma_g$
denote $\pi_1(\Sigma_g)$.
The Gromov boundary of $\Gamma_g$ is a topological circle, on which the
group $\Aut(\Gamma_g)$ of automorphisms of $\Gamma_g$ acts faithfully by homeomorphisms.
Geometrically, this boundary action of $\Aut(\Gamma_g)$ can be seen as follows.
By the Dehn-Nielsen-Baer theorem,
the exact sequence $\Inn(\Gamma_g) \to \Aut(\Gamma_g) \to \Out(\Gamma_g)$
is isomorphic, term by term, to the Birman exact sequence
$\pi_1(\Sigma_g) \to \Mod^{\pm}_{g, 1} \to \Mod^{\pm}_g$ of the extended
mapping class group of a surface with one marked point.
Fixing a hyperbolic metric on $\Sigma_g$, the universal cover
$\widetilde{\Sigma_g}$ can be identified with $\H^2$, which has a natural
compactification to a closed disc.
Let $x \in \H^2$ be a lift of the marked point on $\Sigma_g$.
For $f \in \Homeo(\Sigma_g)$ fixing the marked point, let $\tilde{f}$ denote the unique lift of $f$ to
$\H^2$ that fixes $x$. Using the fact that quasi-geodesics remain bounded
distance from geodesics in negative curvature, one can show that the action
of $\tilde{f}$ on $\H^2$ extends to a homeomorphism of the boundary circle,
which depends only on the isotopy class of $f$.
This procedure gives a well-defined homomorphism
$\Mod^{\pm}_{g,1} \to \Homeo(S^1)$, which under the identification
$\Aut(\Gamma_g) \cong \Mod^{\pm}_{g,1}$, is conjugate to the boundary
action described above.
We call this conjugacy class of action the {\em standard action} of
$\Aut(\Gamma_g)$ on $S^1$.

The mapping class group $\Mod_{g,1}$ consisting of isotopy classes of
orientation-preserving homeomorphisms is an index two subgroup of
$\Mod^{\pm}_{g,1}$; we let $\Aut_+(\Gamma_g)$ denote the corresponding
subgroup of $\Aut(\Gamma_g)$.
In~\cite[Question~6.2]{Farb}, B.~Farb asked whether any faithful action of
$\Aut_+(\Gamma_g)$ on $S^1$ by homeomorphisms was necessarily conjugate to the
standard action described above.
In fact one needs to be a little more careful with the statement -- rather
than conjugacy, the appropriate notion of equivalence for $C^0$ actions on
$S^1$ is Ghys' {\em semi-conjugacy}, described below, since 
any action of an infinite group on $S^1$ can be modified (for instance, using
the classical Denjoy trick) to produce non-conjugate, but semi-conjugate
examples.  
Here we give a positive answer to this version of Farb's question.
\begin{theorem} \label{thm:main}
  Let $\rho\colon\Aut_+(\Gamma_g) \to \Homeo_+(S^1)$ be a homomorphism.
  Up to reversing the orientation of the circle, we have the following.
  If $g \geq 3$, then $\rho$ is either trivial or semi-conjugate
  to the standard Gromov boundary action.
  If $g=2$, then $\rho$ is either conjugate to a subgroup of $\Z/10\Z$
  acting by rotations, or is semi-conjugate to the standard action.
\end{theorem}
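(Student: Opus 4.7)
My plan is to study the restriction $\rho_0 := \rho|_{\Inn(\Gamma_g)}$, which via the Dehn--Nielsen--Baer identification is an action of the surface group $\Gamma_g$ on $S^1$, and then promote rigidity of $\rho_0$ to rigidity of $\rho$. The two main ingredients I want to invoke are the Milnor--Wood inequality and Matsumoto's theorem: the Euler number $e(\rho_0)$ is an integer of absolute value at most $2g-2$, and if $|e(\rho_0)| = 2g-2$ then $\rho_0$ is semi-conjugate to the Fuchsian boundary action. So the task splits into three issues: (i) show that $e(\rho_0)$ is either $0$ (with $\rho_0$ trivial) or extremal; (ii) in the extremal case, extend the semi-conjugacy from $\Gamma_g$ to $\Aut_+(\Gamma_g)$; (iii) handle the case when $\rho_0$ is trivial.

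For (i), the key observation is that because $\Inn(\Gamma_g)$ is normal in $\Aut_+(\Gamma_g)$, conjugation by $\rho(\Aut_+(\Gamma_g))$ preserves the set $\rho_0(\Gamma_g)$, and in particular the Euler number of $\rho_0$ is a $\Mod_{g,1}$-invariant quantity in the sense that pre-composing $\rho_0$ with an outer automorphism gives a semi-conjugate action. I plan to exploit the rich torsion in $\Mod_{g,1}$: the rotation number of $\rho(t)$ for $t$ of finite order $n$ is rational with denominator dividing $n$, so one obtains strong numerical constraints from relations between torsion elements, finite subgroups coming from symmetric surfaces, and the point-pushing subgroup. The strategy will be to show that if $\rho_0$ is non-trivial then enough torsion elements in $\Mod_{g,1}$ are forced to act non-trivially with controlled dynamics that the semi-conjugacy class of $\rho_0$ cannot be deformed under $\Mod_{g,1}$, which combined with convexity of the set of Euler numbers realized by twists of $\rho_0$ forces $|e(\rho_0)|=2g-2$. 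This step is what I expect to be the main obstacle, because intermediate non-extremal values of $e$ are \emph{a priori} compatible with a great variety of semi-conjugacy classes, and ruling them all out requires a delicate argument with torsion rather than a single numerical identity.

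For (ii), once $\rho_0$ is semi-conjugate to the standard Fuchsian action, I would work in the standard model and analyze how $\rho(\phi)$ acts for $\phi \in \Aut_+(\Gamma_g)$. For every $\gamma \in \Gamma_g$, the equivariance $\rho(\phi)\rho_0(\gamma)\rho(\phi)^{-1} = \rho_0(\phi(\gamma))$ forces $\rho(\phi)$ to send the attracting fixed point of $\rho_0(\gamma)$ to that of $\rho_0(\phi(\gamma))$. Since attracting fixed points of hyperbolic elements of $\Gamma_g$ are dense in the Gromov boundary, this determines $\rho(\phi)$ on a dense set, and the assignment agrees with the standard action of $\phi$ on $\partial\Gamma_g$, yielding the desired semi-conjugacy for the whole group.

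Finally, for (iii), if $\rho_0$ is trivial then $\rho$ factors through $\Mod^{\pm}_{g,1}/\Inn(\Gamma_g) = \Mod^{\pm}_g$, and in fact through $\Mod_g$. For $g \geq 3$ the group $\Mod_g$ is perfect, so any homomorphism to the abelian group of rotations is trivial, and one needs to rule out minimal non-abelian circle actions; here the standard trick is to invoke that $\Mod_g$ contains commuting subgroups of infinite torsion or high-rank abelian subgroups that cannot act faithfully with a common fixed point, forcing the rotation number homomorphism to be trivial and then the action to be trivial. For $g = 2$, $\Mod_2$ has abelianization $\Z/10\Z$, which accounts for the stated finite cyclic possibility; one must verify that every such cyclic image is conjugate to a rotation subgroup, which is immediate from the classification of finite-order circle homeomorphisms.
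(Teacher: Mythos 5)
Your step (ii) is essentially the paper's argument (density of attracting fixed points plus equivariance determines $\rho(\phi)$ once $\rho|_{\Gamma_g}$ is minimal), and your overall frame (Milnor--Wood plus Matsumoto) is the right one. But the proposal has a genuine gap exactly where the theorem's content lies, namely step (i). You assert that you will show $e(\rho_0)$ is either extremal or zero \emph{with $\rho_0$ trivial}, but the strategy you offer --- ``numerical constraints from torsion,'' ``convexity of the set of Euler numbers realized by twists of $\rho_0$'' --- is not an argument: intermediate Euler numbers are not excluded by any convexity statement, and nothing you write bridges the implication ``$e(\rho_0)=0 \Rightarrow \rho_0$ trivial,'' which is the crux. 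The paper's route is concrete and quite different in its details: first one proves that $\rot(\rho(a))=0$ for every nonseparating simple closed curve $a$ (transitivity of $\Mod_{g,1}$ plus an involution reduce to excluding rotation number $\tfrac12$, and that exclusion needs a nontrivial technical input, Prop.~5.5 of \cite{RigidGeom}); then a pants decomposition with nonseparating boundary curves, all pants being $\Mod_{g,1}$-equivalent, forces every pair of pants to contribute the same amount in $\{0,\pm1\}$, whence $e(\rho_0)\in\{0,\pm(2g-2)\}$. For the zero case, the paper embeds the $(0;2,2,2,2g)$ orbifold group (and, for $g=2$, the $(0;3,3,4)$ group) into $\Aut_+(\Gamma_g)$ as the normalizer of a Fuchsian copy of $\Gamma_g$ containing it with finite index; multiplicativity of the Euler number together with the formula $\eu = m + \sum_i \rot(\rho(q_i))$ shows that $e(\rho_0)=0$ forces a specific torsion element to lie in $\ker\rho$, and then the Lanier--Margalit normal generation theorem forces $\rho$ to factor through the abelianization ($1$ for $g\geq3$, $\Z/10\Z$ for $g=2$). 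None of this machinery (nor a substitute for it) appears in your plan.

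Step (iii) also does not hold up as written. Even granting that $\rho_0$ is trivial, so that $\rho$ factors through $\Mod_g$, the claim that every circle action of $\Mod_g$, $g\geq3$, is trivial is itself a theorem requiring proof; your sketch (``commuting subgroups of infinite torsion \dots forcing the rotation number homomorphism to be trivial and then the action to be trivial'') is not valid, since $\rot$ is not a homomorphism on $\HOS$ and vanishing rotation numbers do not imply a trivial action. The honest way to finish that case is again via normal generation by torsion (Lanier--Margalit), which is precisely the paper's final ingredient --- so the missing ideas are the rotation-number lemma for nonseparating curves, the orbifold-group embeddings with the Euler-number multiplicativity, and the normal generation theorem.
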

Actions conjugate to finite groups of rotations do indeed arise in the
genus~2 case. As shown by Mumford~\cite{Mumford}, the abelianization of
$\Aut_+(\Sigma_2)$ is $\Z/10\Z$ (Mumford discusses $\Mod_2$, but the same
is true of $\Mod_{2,1}$, which is also generated by Dehn twists in simple
closed curves. See \cite{Korkmaz}.)
Finite cyclic groups do act on $S^1$ -- necessarily by an action conjugate
to one by rigid rotations. Our theorem simply states that any
non-standard action factors through the abelianization.
Note also that the theorem immediately gives a corresponding statement for
homomorphisms of the larger group $\Aut(\Gamma_g)$ into $\Homeo(S^1)$.

As asserted above, semi-conjugacy is also a necessary hypothesis.  
Following usage of Ghys, two actions $\rho_1$ and $\rho_2: \Gamma \to \HOS$ are said to be 
``semi-conjugate'' if there exists an equivariant, cyclic order
preserving bijection from some orbit of $S^1$ under $\rho_1$ to some orbit under $\rho_2$.   
Note that this is {\em not} equivalent to the usual notion of semi-conjugacy from
topological dynamics.   We suggest instead the term {\em weakly conjugate}.  Indeed, 
the condition given above is equivalent to the condition that any continuous, 
conjugacy-invariant, real-valued map $f$ on
$\Hom(\Gamma,\HOS)$, satisfies $f(\rho_1)=f(\rho_2)$, and the quotient of
$\Hom(\Gamma,\HOS)$ by this weak- (or, following Ghys, semi-) conjugacy relation is the analog of the {\em character
variety} for linear representations of $\Gamma$.  See \cite{RigidGeom} for a full
discussion.   However, as ``weakly conjugate'' is not yet standard, we defer to tradition and 
use the term semi-conjugacy in the remainder of the paper.  
In our situation, the standard action of $\Aut_+(\Sigma_g)$ is {\em minimal},
and applying the Denjoy trick produces a non-minimal, hence non-conjugate
action. Note, however, that any two minimal, semi-conjugate actions are in
fact conjugate.

As a consequence of Theorem~\ref{thm:main}, we can quickly recover and extend results of Parwani~\cite{Parwani}
and of Farb and Franks~\cite{FarbFranks} concerning the nonexistence of actions of higher regularity.  

\begin{corollary} \label{cor:Parwani}
  For $g \geq 3$, any action of $\Aut_+(\Sigma_g)$ on $S^1$ by
  $C^1$ diffeomorphisms is trivial.
\end{corollary}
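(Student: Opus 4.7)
The plan is to combine Theorem~\ref{thm:main} with Thurston's $C^1$ stability theorem. Suppose $g\geq 3$ and $\rho\colon\Aut_+(\Gamma_g)\to\Diff^1_+(S^1)$ is a non-trivial homomorphism. By Theorem~\ref{thm:main}, up to orientation reversal $\rho$ is semi-conjugate to the standard Gromov-boundary action $\rho_{\mathrm{std}}$; I aim to rule this out in $C^1$ regularity.

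The first step is to locate a closed $\rho(H)$-invariant arc $\bar J\subset S^1$, with endpoints fixed by $\rho(H)$, for a perfect subgroup $H\leq \Aut_+(\Gamma_g)$ whose standard action on the corresponding arc is minimal. Concretely, pick a non-separating simple closed curve $c\subset\Sigma_g$, a lift $\tilde c\subset \H^2$, and a side of $\tilde c$, and let $H$ be the corresponding $\rho_{\mathrm{std}}$-stabilizer; geometrically $H$ is (an index-two subgroup of) the mapping class group of the subsurface obtained by cutting along $c$, and for $g\geq 4$ has trivial abelianization by Korkmaz~\cite{Korkmaz}. For $g=3$ this fails with a single curve, and one replaces $c$ by a multicurve cutting $\Sigma_3$ into pieces whose mapping class groups are perfect, or passes to the commutator subgroup. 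The arc on the chosen side of $\tilde c$ in $\partial\H^2$ is $H$-invariant in $\rho_{\mathrm{std}}$, and via the monotone semi-conjugacy map $h\colon S^1\to S^1$ transfers to the desired arc $\bar J$.

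With $(\bar J, H)$ in hand, I apply Thurston's $C^1$ stability theorem to $\rho(H)$ acting on $\bar J$: the derivative homomorphism $H\to \R^*$ at either endpoint is trivial by perfectness, and Thurston's theorem then forces $\rho(H)$ to act trivially in a one-sided neighborhood of each endpoint---otherwise $H$ would admit a non-trivial homomorphism to $(\R,+)$, again contradicting perfectness. The minimality of $\rho_{\mathrm{std}}(H)$ on the interior of the corresponding arc, together with semi-conjugacy, then propagates this local triviality to all of $\bar J$. Ranging over enough choices of $c,\tilde c$ so that the associated arcs cover $S^1$ forces $\rho$ to be globally trivial, a contradiction. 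The main obstacle is the degenerate case in which $h$ collapses $J$ to a point, making $\bar J$ a single fixed point rather than a non-trivial arc; handling it requires either showing this cannot occur for all choices of $c$ simultaneously, or arguing that uniform collapse produces a global fixed point of $\rho(\Aut_+(\Gamma_g))$, to which Thurston stability applies directly using the perfectness of $\Aut_+(\Gamma_g)$ itself for $g\geq 3$.
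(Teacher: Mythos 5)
Your overall strategy (Theorem~\ref{thm:main} plus Thurston stability applied to a curve stabilizer with small abelianization) is the same as the paper's, but as written there are genuine gaps. First, the degenerate case you flag as ``the main obstacle'' is left unresolved, and the machinery that forces you into it is both unproved and unnecessary. Since the standard action is minimal, the semi-conjugacy is realized by a monotone degree-one map $h$ from \emph{your} circle onto the standard one, satisfying $h\circ\rho=\rho_{\mathrm{std}}\circ h$; so you should pull the invariant arc back (where no collapsing can occur), not push it forward. More seriously, your propagation step needs minimality of the stabilizer's action on the interior of the arc: you assert this for $\rho_{\mathrm{std}}(H)$ without proof, and even granting it, it does not transfer to $\rho(H)$ on $\bar J$, because $\rho$ may have wandering intervals inside $\bar J$ that $h$ collapses -- so ``local triviality near the endpoints propagates to all of $\bar J$'' does not follow. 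None of this is needed: once you know $\Fix(\rho(H))$ is nonempty (an endpoint of the invariant arc, or equivalently persistence of a global fixed point under semi-conjugacy, as in the paper) and proper (otherwise $\rho(H)=\{\mathrm{id}\}$, which contradicts equivariance with the nontrivial standard action of $H$), you simply take $x$ on the \emph{boundary} of $\Fix(\rho(H))$: the germ of $\rho(H)$ at $x$ is nontrivial by construction, its linear part is trivial because $H$ has no nontrivial homomorphism to $\R^*$, and Thurston stability produces a surjection onto $\Z$, the desired contradiction. This is exactly the paper's argument, with a separating curve cutting off a genus $h\geq 2$ piece so that the stabilizer contains $\Mod_h^1$.

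Second, your treatment of $g=3$ does not work as stated. Cutting $\Sigma_3$ along a nonseparating curve leaves a genus~2 subsurface, whose mapping class group has abelianization $\Z/10\Z$ (Korkmaz), and neither proposed repair succeeds: no decomposition of $\Sigma_3$ yields pieces with perfect mapping class groups (genus~0 and~1 pieces have infinite abelianization, genus~2 pieces have $\Z/10\Z$), and the commutator subgroup of such a stabilizer is not known to be perfect, so ``perfectness'' cannot be restored this way. The correct observation is that perfectness is more than you need: the Thurston stability argument only requires that $H$ admit no nontrivial homomorphism to $\R$ (for the linear part) or to $\Z$ (for the conclusion), so a \emph{finite} abelianization such as $\Z/10\Z$ suffices. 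With that adjustment, and with the boundary-of-fixed-set argument above replacing the minimality/covering scheme, your proof closes up and essentially coincides with the paper's.
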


Parwani proved this statement under the additional assumption of genus at least 6, however his proof also applies to surfaces with boundary.  Farb and Franks prove the statement for $g \geq 3$, but with $C^1$ replaced by $C^2$.  They also are able to argue for a larger class of groups. 
We give the proof of Corollary~\ref{cor:Parwani} in Section~\ref{sec:conclude}.


\section{Proof of Theorem \ref{thm:main}}

Let $g \geq 2$, and let $\rho: \Aut_+(\Gamma_g) \to \Homeo_+(S^1)$ be a
representation.
The strategy of the proof is to constrain the possible behavior of the
restriction of $\rho$ to the surface subgroup
$\Gamma_g \cong \Inn(\Gamma_g) \subset \Aut_+(\Gamma_g)$.
We will use the notation $\Aut_+(\Gamma_g)$ and $\Mod_{g,1}$ interchangeably
throughout the proof, depending on whether we prefer to evoke $\Gamma_g$ as
an abstract group, or whether it is useful to remember the topology of the
surface~$\Sigma_g$.

\subsection{Action of the surface subgroup} \label{sec:euler}

Before embarking on the proof,
we briefly recall some standard material on rotation numbers
and on the Euler number of a representation. The group $\HOS$ fits in the
exact sequence
\[ 0\to\Z\to\HOZ\to\HOS\to 1, \]
where $\HOZ$
is the group of (orientation-preserving) homeomorphisms of $\R$ which
commute with integer translations.
The {\em translation number of} an element $f\in\HOZ$ is defined by
$\rotild(f)=\lim_{n\to+\infty}\frac{f^n(x)}{n}$.  This limit exists and is independent of the choice of $x\in\R$.
For $f\in\HOS$, the (Poincar\'e) {\em rotation number} of $f$, denoted
$\rot(f)$, is defined as the translation number of any of its lifts
to $\HOZ$ modulo~$\Z$.  
It is easily verified that $\rot$ is conjugacy invariant, and satisfies $\rot(f^{-1}) = - \rot(f)$. 

Any representation $\varphi: \Gamma_g \to \HOS$ can be assigned an
integral {\em Euler number} as follows:
associated to $\varphi$, there is an $S^1$-bundle over $\Sigma_g$ given by 
the quotient of $\widetilde{\Sigma_g}\times S^1$ by the diagonal action of
$\Gamma_g$ \textsl{via}
deck transformations on $\widetilde{\Sigma_g}$ and $\varphi$ on $S^1$. 
The Euler number $\eu(\varphi)$ is the pairing of the Euler class of this
bundle with the fundamental class of $\Sigma_g$; it is (classically) the
obstruction to a section of the bundle.   
Following the interpretation of Milnor and Wood~\cite{Milnor,Wood} the Euler
number of $\varphi$ can also be seen as the obstruction to lifting $\varphi$
to a representation into $\HOZ$. This can be computed by understanding
rotation numbers of individual elements of $\varphi(\Gamma_g)$, or more
precisely, through the {\em translation cocycle},
$\tau\colon\HOS \times \HOS\to\R$ defined by
$(f,g)\mapsto\rotild(\tilde{f}\tilde{g})-\rotild(\tilde{f})-
\rotild(\tilde{g})$.
That this cocycle takes values in $[-1,1]$ is the key to the {\em Milnor-Wood
inequality}, which states that $|\eu(\varphi)|\leq 2g-2$ for any
representation $\varphi\colon\Gamma_g\to\HOS$.

One way to compute the Euler number of an action $\varphi$ of $\Gamma_g$ using rotation numbers is to
decompose $\Sigma_g$ into pairs of pants, and then sum the contribution to the Euler number coming from each pant.   Suppose $P \subset \Sigma_g$ is an embedded pants subsurface with fundamental group
$\langle a,b,c\mid abc=1\rangle$ where $a,b$ and $c$ are freely homotopic
to the boundary curves of $P$.  The surface $P$ inherits an orientation from $\Sigma_g$, and we require $a, b$ and $c$ to have the induced boundary orientation. 
We define $\eu(\varphi_{|P}):=\tau(\varphi(a),\varphi(b))$; this is invariant under cyclic permutations of $a,b,c$.  
Then the Euler number of $\varphi$ is simply the sum $\sum_{P \in \mathcal{P}} \eu(\varphi_{|P})$, where $\mathcal{P}$ is any pants decomposition.  
We refer the reader to~\cite{Ghys01} for a general introduction to Euler
classes. The definition given here that applies to pants subsurfaces
is a contribution of Burger, Iozzi and Wienhard (see~\cite{BIW}, formula~(4.4)
and~Th.~4.6).
The reader may also refer to~\cite[Section~5]{AuMoinsG} for a brief
explanation and proof of well-definedness that avoids the use of bounded
cohomology. 

While the discussion above was general, we now return to our assumption that
$\rho$ is a representation of $\Aut_+(\Gamma_g)$ in $\Homeo_+(S^1)$.
Using a technical result from work of the authors in \cite{RigidGeom},
we prove a first lemma.

\begin{lemma} \label{lem:rot0}
If $a$ is a nonseparating simple closed curve, then $\rot(\rho(a)) =0$.
\end{lemma}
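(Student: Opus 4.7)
Write $r := \rot(\rho(a))$; the plan is to prove the lemma in three steps, the first two resting on change of coordinates and a hyperelliptic involution symmetry, the third on a technical result from~\cite{RigidGeom}. First, I show $r$ is independent of the choice of nonseparating simple closed curve $a$. Under the Dehn--Nielsen--Baer identification, conjugation in $\Aut_+(\Gamma_g)$ of the inner automorphism $\iota_a$ by a mapping class $\phi \in \Mod_{g,1}$ produces $\iota_{\phi_*(a)}$. By the change of coordinates principle, $\Mod_{g,1}$ acts transitively on isotopy classes of oriented nonseparating simple closed curves, so any two such curves become conjugate in $\Aut_+(\Gamma_g)$; hence their images under $\rho$ are conjugate in $\Homeo_+(S^1)$ and share the common rotation number $r$.

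Second, I show $2r = 0$ in $\R/\Z$. The hyperelliptic involution $\sigma$ of $\Sigma_g$ is orientation-preserving with $2g+2$ fixed points; placing the marked point at one of them makes $\sigma \in \Mod_{g,1}$. In the standard $4g$-gon model, $\sigma$ sends each symplectic generator of $\Gamma_g$ to a conjugate of its inverse, and by the first step the same conclusion passes to every nonseparating simple closed curve. Hence $\rho(a)$ is conjugate to $\rho(a)^{-1}$ in $\Homeo_+(S^1)$, and combining conjugacy-invariance of $\rot$ with $\rot(f^{-1}) = -\rot(f)$ gives $r = -r$ in $\R/\Z$, so $r \in \{0,\tfrac12\}$.

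Ruling out $r = \tfrac12$ is the main obstacle. This assumption would force every nonseparating simple closed curve to act under $\rho$ with rotation number $\tfrac12$, hence without fixed points but with $\rho(a)^2$ fixing a point. To derive a contradiction, my plan is to combine this constant-rotation-number constraint with the Euler number machinery of Section~\ref{sec:euler}: choose a pants decomposition $\mathcal{P}$ of $\Sigma_g$ with all curves nonseparating (possible for $g \geq 2$), and control $\eu(\rho|_{\Gamma_g}) = \sum_{P \in \mathcal{P}} \tau(\rho(a), \rho(b))$ using the rigidity input from~\cite{RigidGeom}, which for surface group representations subject to such a constant-rotation-number condition should pin down the translation cocycle values strongly enough to contradict either the Milnor--Wood bound $|\eu(\rho|_{\Gamma_g})| \leq 2g-2$ or the algebraic relations in $\Gamma_g$ coming from the pants decomposition.
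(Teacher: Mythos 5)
Your first two steps are fine and coincide with the paper's argument: transitivity of $\Mod_{g,1}$ on nonseparating simple closed curves gives a common rotation number $r$, and the involution sending $a$ to (a conjugate of) $a^{-1}$ forces $r=-r$, so $r\in\{0,\tfrac12\}$. But the exclusion of $r=\tfrac12$, which you yourself identify as the main obstacle, is where your proposal has a genuine gap: what you offer is a plan (``should pin down the translation cocycle values strongly enough to contradict either the Milnor--Wood bound or the algebraic relations''), not an argument. Worse, the specific route you sketch does not obviously close. If every pants curve in a nonseparating pants decomposition has rotation number $\tfrac12$, then each pants contribution $\tau(\rho(a),\rho(b))$ is congruent to $-\bigl(\rot\rho(a)+\rot\rho(b)+\rot\rho(c)\bigr)\equiv\tfrac12 \pmod 1$, hence equals $\pm\tfrac12$; summing $2g-2$ such terms can perfectly well give an integer of absolute value at most $2g-2$, so neither integrality of the Euler number nor the Milnor--Wood inequality is violated. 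The constraint that is actually exploited is not numerical but dynamical, and it requires a concrete statement from \cite{RigidGeom} rather than a generic appeal to ``rigidity input.''

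The paper's resolution is different and sharper: take a one-holed torus subsurface $T$ with standard generators $a,b$, and invoke Proposition~5.5 of \cite{RigidGeom}, which gives a procedure producing a simple closed curve $c$ contained in $T$ with $0\leq|\rot(\rho(c))|<\tfrac12$. Since $c$ is again nonseparating, this contradicts the constancy of the rotation number established in your first step. So the missing ingredient in your write-up is precisely this one-holed-torus construction (or some substitute of comparable strength); without it, or with only the Euler-number bookkeeping you propose, the case $r=\tfrac12$ is not ruled out.
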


\begin{proof}
It suffices to prove this for a single nonseparating simple closed curve,
since $\Mod_{g,1}$ acts transitively on such curves, and so every nonsimple
closed curve has the same rotation number (up to multiplying by $-1$ to
account for orientation).
Let $a$ be a nonseparating simple closed curve. Since there is an involution
of $\Sigma_g$ that maps $a$ to its inverse, it follows that
$\rot(\rho(a)) = \rot(\rho(a^{-1})) = - \rot(\rho(a))$, so $\rot(\rho(a))$
is either $\frac{1}{2}$ or 0.

Now suppose for contradiction that this common rotation
number for nonseparating simple curves is $\frac{1}{2}$, and let $a,b$ be
standard generators of the fundamental group of
a subsurface $T$ of $\Sigma_g$ homeomorphic to a one-holed torus.
Proposition~5.5 from~\cite{RigidGeom} gives a procedure to produce a 
simple closed curve $c$ contained in $T$ such that
$0 \leq |\rot(\rho(c))| < \frac{1}{2}$.
But this contradicts the fact that $\rot(\rho(a)) = \pm \rot(\rho(c))$.
\end{proof}

As a consequence, we have the following.  

\begin{corollary}
The Euler number of $\rho|_{\Gamma_g}$ is either 0 or $\pm(2g-2)$.
\end{corollary}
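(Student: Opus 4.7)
The plan is to combine Lemma~\ref{lem:rot0} with the pants-decomposition formula for the Euler number, exploiting the fact that pants subsurfaces of $\Sigma_g$ whose three boundary curves are all nonseparating form a single orbit under $\Mod_{g,1}$.

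Concretely, I would first fix a pants decomposition $\mathcal{P}$ of $\Sigma_g$ consisting entirely of nonseparating simple closed curves; such decompositions exist for every $g\ge 2$. For each pants $P\in\mathcal{P}$ with $\pi_1 P=\langle a,b,c\mid abc=1\rangle$, Lemma~\ref{lem:rot0} gives $\rot(\rho(a))=\rot(\rho(b))=\rot(\rho(c))=0$, so there exist lifts $\tilde a,\tilde b\in\HOZ$ with translation number $0$. The pants contribution then simplifies to
\[
\eu(\rho|_P)=\tau(\rho(a),\rho(b))=\rotild(\tilde a\tilde b)-\rotild(\tilde a)-\rotild(\tilde b)=\rotild(\tilde a\tilde b).
\]
Since $ab=c^{-1}$ also has rotation number $0$, this value is an integer, and the cocycle bound $|\tau|\le 1$ forces $\eu(\rho|_P)\in\{-1,0,1\}$.

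The heart of the argument is showing that $\eu(\rho|_P)$ takes the same value for every $P\in\mathcal{P}$. My key topological observation is that if all three boundary circles of a pants $P\subset\Sigma_g$ are nonseparating, then the complement $\Sigma_g\setminus P$ must be connected of genus $g-2$ with three boundary components: any complementary component meeting $P$ along only a single boundary circle would render that circle separating. Hence such pants subsurfaces form a single topological type, and by the change of coordinates principle $\Mod_{g,1}$ acts transitively on their isotopy classes (after arranging the marked point to lie in a fixed region of the complement). For any $P_1,P_2\in\mathcal{P}$, picking $\phi\in\Mod_{g,1}$ with $\phi(P_1)=P_2$, the element $\rho(\phi)$ simultaneously conjugates the images under $\rho$ of the boundary generators of $P_1$ to those of $P_2$, up to a cyclic permutation. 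Since $\tau$ is invariant under both simultaneous conjugation and cyclic permutation of $a,b,c$, we deduce $\eu(\rho|_{P_1})=\eu(\rho|_{P_2})$.

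Summing over $\mathcal{P}$ then gives $\eu(\rho|_{\Gamma_g})=(2g-2)\epsilon$ for some common $\epsilon\in\{-1,0,1\}$, proving the corollary. The main obstacle is the transitivity step: one has to be careful about the role of the marked point when working inside $\Mod_{g,1}$ rather than $\Mod_g$, and about how $\phi$ relabels the boundary generators of the two pants; both are handled by combining the cyclic invariance of $\tau$ with its invariance under simultaneous conjugation, and once these symmetries are set up the conclusion is immediate.
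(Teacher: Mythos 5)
Your argument is correct and follows essentially the same route as the paper's proof: a pants decomposition with all curves nonseparating, Lemma~\ref{lem:rot0} together with the bound on the cocycle $\tau$ to force each pants contribution into $\{-1,0,1\}$, and transitivity of $\Mod_{g,1}$ on such pants (via change of coordinates) plus conjugation/cyclic invariance of $\tau$ to make all contributions equal before summing. You merely supply slightly more detail than the paper on the integrality of each contribution and on why such pants form a single topological type.
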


\begin{proof}
Let $P_1, P_2, \ldots, P_{2g-2}$ be an oriented pants decomposition of
$\Sigma_g$ such that each boundary curve of each $P_i$ is nonseparating.  
For $i=1,\ldots,2g-2$ let $a_i,b_i,c_i$ be generators of the fundamental group
of $P_i$ with orientation inherited from $P_i$.  
(This is a slight abuse of notation, as
we base all these curves at the same point in~$\Sigma_g$.)

By Lemma~\ref{lem:rot0} and the above mentioned bound on $\tau$, 
for each $i$ we have that $\eu(\rho_{|P_i})$ is either $0$ or $\pm 1$.
Since the boundary curves of $P_i$ are nonseparating, for each $i$ there is an
orientation-preserving homeomorphism $f_i$ of $\Sigma_g$ sending $P_i$ to $P_1$.  
It follows that the triples $(\rho(a_i),\rho(b_i),\rho(c_i))$ are all
conjugate in $\HOS$; with $(\rho(a_i),\rho(b_i),\rho(c_i))$ conjugate to $(\rho(a_1),\rho(b_1),\rho(c_1))$ by the image of the mapping class of $f_i$ under $\rho$.   
Hence the contributions of all $P_i$ to the Euler number $\eu(\rho_{|\Gamma_g})$ are all equal, and so their sum is
either $0$ or $\pm(2g-2)$.
\end{proof}
To treat the case where $\eu(\rho) = \pm(2g-2)$ we use the following theorem of Matsumoto.  

\begin{theorem}[Matsumoto \cite{Matsumoto}]
  If $\Gamma_g$ acts on $S^1$ with Euler number equal to $\pm(2g-2)$,
  then the action is semi-conjugate to the boundary action given by
  embedding $\Gamma_g$ in $\PSL$ as a cocompact lattice.
\end{theorem}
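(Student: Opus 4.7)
The plan is to reduce the theorem to Ghys' characterisation of semi-conjugacy via the bounded Euler class, namely that two homomorphisms $\Gamma\to\HOS$ are semi-conjugate in the sense of this paper if and only if their pulled-back bounded Euler classes agree in $H^2_b(\Gamma;\Z)$. With this reduction in hand, it suffices to prove that any $\rho$ with $|\eu(\rho)|=2g-2$ has the same bounded Euler class as a uniformising Fuchsian representation $\rho_0\colon\Gamma_g\to\PSL$.

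The first step is to localise using a pants decomposition $P_1,\dots,P_{2g-2}$ of $\Sigma_g$ with all cuffs non-separating, and to choose generators $a_i,b_i,c_i$ of $\pi_1(P_i)$ satisfying $a_ib_ic_i=1$. The pants formula $\eu(\rho)=\sum_i\tau(\rho(a_i),\rho(b_i))$ together with the pointwise bound $|\tau|\leq 1$ forces, at the extremal value $|\eu(\rho)|=2g-2$, every summand to equal $\pm 1$ with a common sign; after possibly reversing the orientation of $S^1$ I may assume $\tau(\rho(a_i),\rho(b_i))=1$ for all $i$.

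The heart of the proof is the analysis of the equality case $\tau(f,g)=1$. Rewritten in terms of lifts $\tilde f,\tilde g\in\HOZ$ this reads $\rotild(\tilde f\tilde g)=\rotild(\tilde f)+\rotild(\tilde g)+1$, and one extracts from it strong dynamical constraints on the triple $(f,g,(fg)^{-1})$: qualitatively, the same fixed-point combinatorics and cyclic order on $S^1$ as a Fuchsian triple of hyperbolic isometries whose product is the identity. Applying the same analysis to $\rho_0$, which also realises the maximum, yields exactly the same local data. One then argues that these matching values of $\tau$ on pants generators, which by the pants decomposition form a generating family of relations in $\Gamma_g$, promote to an equality of bounded cocycles up to a bounded coboundary, hence to the sought equality $\rho^*\eu_b=\rho_0^*\eu_b$ in $H^2_b(\Gamma_g;\Z)$.

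The main obstacle is precisely this promotion step: squeezing from the single real equation $\tau(f,g)=1$ enough pointwise dynamical information to determine the entire bounded cocycle on each pair of pants, rather than only its integral over $\Sigma_g$. This is the delicate equality case in the Milnor--Wood inequality, and is where the bulk of Matsumoto's original argument is spent; he in fact bypasses explicit use of Ghys' theorem by constructing the semi-conjugacy directly as a monotone equivariant map between orbits, obtained by gluing locally defined monotone maps across cuffs. Either way, once this local rigidity on a single pair of pants is established, the pants-decomposition setup and the final cohomological (or dynamical) assembly are essentially formal.
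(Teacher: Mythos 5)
This statement is quoted in the paper as an external result of Matsumoto; the paper gives no proof of it (it only remarks that the proof uses the translation cocycle and that a more elementary strategy appears in the cited reference), so there is no internal argument to compare against. Judged on its own terms, your proposal is an outline rather than a proof, and the gap sits exactly at the step you yourself flag. Reducing to Ghys' theorem (semi-conjugacy is equivalent to equality of bounded integral Euler classes) and observing that maximality forces $\tau(\rho(a_i),\rho(b_i))=\pm 1$ with a common sign on every pair of pants are both correct and standard. But the ``promotion step'' --- passing from the $2g-2$ numerical equalities $\tau(\rho(a_i),\rho(b_i))=1$ to the equality of bounded cocycles $\rho^*\mathrm{eu}_b=\rho_0^*\mathrm{eu}_b$, or equivalently to a monotone equivariant map --- is not a formal assembly: it \emph{is} the theorem. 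Knowing the value of $\tau$ on one pair of generators per pair of pants constrains only the pairing of the Euler class with the fundamental class; a priori it says nothing about $\tau(\rho(\gamma),\rho(\delta))$ for arbitrary pairs, which is what equality in $H^2_b(\Gamma_g;\Z)$ requires. The extremality analysis (what the equality $\rotild(\tilde f\tilde g)=\rotild(\tilde f)+\rotild(\tilde g)+1$ forces about the cyclic order of fixed points and how these local configurations glue coherently across cuffs to produce a global monotone equivariant map, or a coboundary correction of the cocycle) is the bulk of Matsumoto's argument, and your text explicitly defers it rather than carrying it out.

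So the proposal correctly identifies the right framework and the right reduction, but as written it would not compile into a proof: the central rigidity step is named, not proved. To close it you would need either (a) the detailed dynamical analysis of the equality case of the Milnor--Wood inequality on a single pair of pants together with the gluing argument across the pants decomposition, or (b) a genuinely cohomological argument showing that a maximal representation's bounded Euler class must coincide with the Fuchsian one (which again requires more than the evaluation on $[\Sigma_g]$, since the bounded class is not determined by its integral in the non-maximal range).
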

 
The proof of this uses the translation cocycle defined above; a strategy
for a more elementary approach to the proof can be found in \cite{MatsBP}.
 
\begin{lemma} \label{lem:Matsumoto}
  If the Euler number of the restriction of $\rho$ to $\Gamma_g$ is nonzero,
  then up to reversing orientation of the circle, $\rho$ is semi-conjugate
  to the standard action.
\end{lemma}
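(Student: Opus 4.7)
The plan is to first reduce to the case where $\rho|_{\Gamma_g}$ is semi-conjugate to the Fuchsian boundary action $\rho_{\mathrm{std}}|_{\Gamma_g}$, then upgrade this semi-conjugacy on the subgroup $\Gamma_g$ to a semi-conjugacy on the full group $\Aut_+(\Gamma_g)$. By the preceding corollary, the Euler number of $\rho|_{\Gamma_g}$ is $\pm(2g-2)$; after possibly reversing the orientation of the circle we may assume it equals $+(2g-2)$, and then Matsumoto's theorem says that $\rho|_{\Gamma_g}$ is semi-conjugate to $\rho_{\mathrm{std}}|_{\Gamma_g}$.

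Next I would pass to the minimal model. Since $\rho|_{\Gamma_g}$ is semi-conjugate to the minimal Fuchsian action, it admits a \emph{unique} minimal set $K \subset S^1$, which is either all of $S^1$ or a Cantor set. The key observation is that $K$ is invariant under the full representation $\rho$: for any $\phi \in \Aut_+(\Gamma_g)$, the set $\rho(\phi)(K)$ is invariant under $\rho(\phi) \rho(\Gamma_g) \rho(\phi)^{-1} = \rho(\Gamma_g)$ (using that $\Inn(\Gamma_g)$ is normal in $\Aut_+(\Gamma_g)$), so it is another minimal set for $\rho|_{\Gamma_g}$ and hence equals $K$. Collapsing the complementary gaps of $K$ therefore yields a representation $\bar\rho\colon \Aut_+(\Gamma_g)\to\Homeo_+(S^1)$ semi-conjugate to $\rho$ whose restriction $\bar\rho|_{\Gamma_g}$ is minimal. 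Minimal actions that are semi-conjugate are actually conjugate, so after a further conjugation I may assume $\bar\rho|_{\Gamma_g} = \rho_{\mathrm{std}}|_{\Gamma_g}$.

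The final step is the rigidity argument on $\Aut_+(\Gamma_g)$. For any $\phi \in \Aut_+(\Gamma_g)$ and any $\gamma \in \Gamma_g$, the relation $\phi \gamma \phi^{-1} = \phi(\gamma)$ in $\Aut_+(\Gamma_g)$ gives
\[
  \bar\rho(\phi)\,\rho_{\mathrm{std}}(\gamma)\,\bar\rho(\phi)^{-1} = \rho_{\mathrm{std}}(\phi(\gamma)) = \rho_{\mathrm{std}}(\phi)\,\rho_{\mathrm{std}}(\gamma)\,\rho_{\mathrm{std}}(\phi)^{-1},
\]
so $\rho_{\mathrm{std}}(\phi)^{-1}\bar\rho(\phi)$ centralizes $\rho_{\mathrm{std}}(\Gamma_g)$ in $\Homeo_+(S^1)$. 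The Fuchsian action has trivial centralizer: an orientation-preserving homeomorphism commuting with every element of $\Gamma_g$ must preserve each pair of fixed points of every hyperbolic element, and then cannot exchange them (that would conjugate the element to its inverse), so it fixes a dense set of points and is the identity. Hence $\bar\rho(\phi) = \rho_{\mathrm{std}}(\phi)$ for all $\phi$, giving $\bar\rho = \rho_{\mathrm{std}}$, and therefore $\rho$ is semi-conjugate to the standard action.

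I expect the delicate step to be the passage to the minimal model: one must verify that the minimal set of $\rho|_{\Gamma_g}$ is preserved by the ambient action of $\Aut_+(\Gamma_g)$, so that the collapsing is equivariant for the whole group and not merely for the surface subgroup. Once this is in place, the rigidity of the cocompact Fuchsian action makes the extension automatic.
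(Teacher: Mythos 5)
Your proposal is correct and follows essentially the same route as the paper: pass to the canonical minimal set (invariant by normality of $\Gamma_g$), collapse gaps to get a minimal action conjugate to the Fuchsian boundary action via Matsumoto, and then pin down the extension to $\Aut_+(\Gamma_g)$ using density of attracting fixed points of hyperbolic elements. Your phrasing of the last step as triviality of the centralizer of the Fuchsian action is just a repackaging of the paper's argument that $\rho(\varphi)$ is determined on the dense set of attractors.
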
 

\begin{proof}
Suppose that the Euler number of the restriction of $\rho$ to $\Gamma_g$ is
nonzero. Then $\rho(\Gamma_g)$ does not have a finite orbit, and so there
is a canonical {\em minimal set} for the action
(see \textsl{e.g.} \cite[Prop. 5.6]{Ghys01}).
This is the unique closed,
$\Gamma_g$-invariant set contained in the closure of any orbit, on which
$\Gamma_g$ acts minimally.
Let $K \subset S^1$ denote the minimal set.  It is equal to $S^1$ 
if the action is minimal, and homeomorphic to a Cantor set if
not.

Since $\Gamma_g$ is normal in $\Aut_+(\Gamma_g)$, the action of
$\Aut_+(\Gamma_g)$ on $S^1$ preserves $K$.
Thus, $K$ is a closed, invariant set on which $\Aut_+(\Gamma_g)$ acts
minimally, hence is the minimal set for $\rho$.    
Up to semi-conjugacy, we may in fact assume that the action of $\rho$ is
minimal. Precisely, if $K$ happens to be a Cantor set, then let
$h\colon S^1 \to S^1$ be a continuous, surjective map that is injective on
$K$ and collapses each of its complementary intervals to a point.
The action of $\Gamma_g$ descends naturally to an action on $h(S^1)$ which is
minimal and semi-conjugate to the original action.
Thus, going forward, we assume that $\rho(\Aut_+(\Gamma_g))$ acts minimally.
As noted above (see also \cite{Ghys01}) minimality implies that the action
of $\rho(\Gamma_g)$ is {\em conjugate} to the standard boundary action.
We claim that this is enough to determine the action of $\Aut_+(\Gamma_g)$
up to conjugacy.  Indeed, minimality of the action of $\Gamma_g$ implies that the set of attracting fixed points of hyperbolic
elements of $\Gamma_g$ represented by closed curves in $\Sigma_g$ is dense
in $S^1$.  If $x \in S^1$ is the attractor of some $\gamma \in \Gamma_g$ represented by a closed curve, then each $\varphi \in \Aut_+(\Gamma_g)$ must necessarily have $\rho(\varphi)$ map $x$ to the (unique) attractor of $\varphi(\gamma) \in \Gamma_g$.
This determines the action of $\rho(\varphi)$ on a dense set, hence completely
specifies it as a homeomorphism.
\end{proof} 

The remainder of the proof of Theorem \ref{thm:main} consists of showing that
the Euler number of $\rho|_{\Gamma_g}$ is 0 only when $\rho$ 
factors through the abelianization of $\Aut_+(\Gamma_g)$.

\subsection{Orbifold groups and their Euler numbers}   \label{sec:orbifold}
Our motivation for the remainder of the proof comes from the following observation.  

\begin{observation} \label{obs:normalizer}
Fix an embedding of $\Gamma_g$ as a lattice in $\PSL$. If $\Delta$ is a
Fuchsian group that normalizes $\Gamma_g$, then $\Delta$ embeds faithfully
into $\Aut_+(\Gamma_g)$.
\end{observation}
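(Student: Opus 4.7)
The plan is to use the conjugation map. Since $\Delta$ normalizes $\Gamma_g$, each $\delta \in \Delta$ defines an automorphism $c_\delta \in \Aut(\Gamma_g)$ by $c_\delta(\gamma) = \delta\gamma\delta^{-1}$, yielding a homomorphism $\iota\colon \Delta \to \Aut(\Gamma_g)$. My goal is to show that $\iota$ takes values in $\Aut_+(\Gamma_g)$ and is injective.

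For the first point, under the Dehn--Nielsen--Baer identification $\Aut(\Gamma_g) \cong \Mod^{\pm}_{g,1}$ recalled in the introduction, the subgroup $\Aut_+(\Gamma_g)$ consists of those automorphisms acting on $\partial \H^2$ by orientation-preserving homeomorphisms under the standard action. Since $\PSL = \Isom_+(\H^2)$, the isometric action of $\delta$ extends to $\partial \H^2$ as an orientation-preserving homeomorphism $\bar{\delta}$. The key observation is that $\bar{\delta}$ realizes the standard boundary action of $c_\delta$: for each hyperbolic $\gamma \in \Gamma_g$ with attracting fixed point $x_\gamma \in \partial \H^2$, equivariance forces $\bar{\delta}(x_\gamma) = x_{\delta\gamma\delta^{-1}} = x_{c_\delta(\gamma)}$, which is exactly the value of the standard action of $c_\delta$ at $x_\gamma$. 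Since these attracting fixed points are dense in $\partial \H^2$, this determines the boundary action of $c_\delta$ everywhere and shows it is orientation-preserving, so $\iota(\delta) \in \Aut_+(\Gamma_g)$.

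For injectivity, suppose $\iota(\delta) = \id$, i.e.\ $\delta$ centralizes $\Gamma_g$ in $\PSL$. Pick two hyperbolic elements $\gamma_1, \gamma_2 \in \Gamma_g$ whose axes have four distinct endpoints on $\partial \H^2$; such pairs exist in any cocompact surface lattice. An element of $\PSL$ commuting with a hyperbolic isometry must preserve its axis and fix both of its boundary endpoints, so $\delta$ would fix at least four distinct points of $\partial \H^2$, forcing $\delta = \id$ since a non-trivial element of $\PSL$ fixes at most two. The only mildly delicate point in the whole argument is the identification of the standard boundary action of $c_\delta$ with $\bar{\delta}$; this follows directly from the construction of the standard action in the introduction, together with the density of attracting fixed points of hyperbolic elements of $\Gamma_g$.
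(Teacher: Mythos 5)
Your proof is correct and follows essentially the same route as the paper: the conjugation homomorphism is injective because the centralizer of $\Gamma_g$ in $\PSL$ is trivial (the paper states this in one line, citing nonelementarity, while you spell out the fixed-point argument on $\partial\H^2$). Your additional verification that the image lands in $\Aut_+(\Gamma_g)$ via the orientation-preserving boundary extension is a detail the paper leaves implicit, and it is handled correctly.
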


\begin{proof}
This is just the observation that the centralizer of $\Gamma_g$ in $\PSL$
is trivial, because $\Gamma_g$ is nonelementary.
\end{proof}

Thus, to get our hands on concrete elements of $\Aut_+(\Gamma_g)$, we produce embeddings of $\Gamma_g$ into $\PSL$ with large normalizers.  These are constructed by realizing $\Sigma_g$ as a regular cover of a hyperbolic orbifold.  
To this end, we recall a few facts about cocompact Fuchsian groups;
the reader may refer to~\cite{Katok} for a general introduction.
Any cocompact Fuchsian group  $\Gamma$ has a {\em signature}, of the form
$(g;m_1,\ldots,m_r)$, corresponding to the presentation
\[\Gamma = \langle a_1,b_1,\ldots,a_g,b_g,q_1,\ldots,q_r\mid
q_1\cdots q_r\cdot [a_1,b_1]\cdots [a_g,b_g]=q_1^{m_1}=\cdots=q_r^{m_r}=1
\rangle. \]
Its covolume (the volume of the quotient $\H^2/\Gamma$) is given by the
formula $\mu(\Gamma)=2\pi \left(2g-2+\sum_{i=1}^r (1-\tfrac{1}{m_i})\right)$;
and its \emph{orbifold Euler characteristic} $\chi(\Gamma)$ is defined to be
$-\mu(\Gamma)/2\pi$.
When $r=0$ this is the fundamental group of a closed surface, with the usual
definition of Euler characteristic and hyperbolic volume. When $r>1$, such
a group $\Gamma$ should be thought of as the holonomy representation of a
hyperbolic surface with $r$ cone points, of cone angles $\frac{2\pi}{m_i}$.
A $(g;m_1,\ldots,m_r)$ {\em orbifold} is simply the quotient of $\H^2$ by
such a group $\Gamma$.

The definition of the {\em Euler number} of a representation $\Gamma_g\to\HOS$ discussed above 
can be extended to representations of any orbifold group.  
As in the case of representations of surface groups, one can form the quotient of $\H^2 \times S^1$ 
by the diagonal action of $\Gamma$; the result is not generally an $S^1$-bundle, but rather a Seifert fibered space. 
Seifert fibered spaces have associated Euler numbers; analogous to the circle bundle case, the Euler number can also be thought of as the obstruction to a $\Gamma$-invariant section of the projection $\H^2 \times S^1 \to \H^2$.  
An equivalent definition can be obtained by thinking of the cone points as
topological boundary components and using the definition 
from~\cite{BIW} of Euler number for representations of surfaces with boundary,
as was used in our pants-decomposition definition in Section~\ref{sec:euler}.

Both the orbifold Euler characteristic and the Euler number are multiplicative under covers.  If $\Delta \subset \Gamma$ is a finite index subgroup of index $k$, then $\chi(\Delta) = k\chi(\Gamma)$ (see eg~\cite[Th.~3.1.2]{Katok}) and for any representation $\rho: \Gamma \to \Homeo_+(S^1)$, we also have $\eu(\rho|_\Delta) = k\,\eu(\rho)$.  
The following observation follows quickly from the definition (most easily from that given in \cite{BIW}) and was used by Calegari in~\cite{CalegariForcing}.

\begin{observation}
Let $\Gamma$ have signature $(g;m_1,\ldots,m_r)$, and let $\rho: \Gamma \to \HOS$ be a representation.  Then there exists $m \in \Z$ such that 
\[ \eu(\rho) = m + \sum_{i=1}^r \rot(\rho(q_i))). \]
\end{observation}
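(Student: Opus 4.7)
My plan is to derive the formula by unwinding the pants-decomposition definition of the Euler number recalled in Section~\ref{sec:euler}, combined with the cocycle identity for $\rotild$.

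I will first realize the orbifold as a genus-$g$ surface with $r$ marked points, thickening each cone point to a topological boundary circle representing $q_i$. The resulting surface-with-boundary $\Sigma$ admits a decomposition into $N = 2g-2+r$ pairs of pants $P_1, \ldots, P_N$, chosen so that each of the $r$ cone-point boundary circles is a boundary component of exactly one $P_j$. Since the torsion relations $q_i^{m_i}=1$ only impose the constraint $\rho(q_i)^{m_i} = \id$, the representation $\rho$ factors through the surface-with-boundary group, and its Euler number equals the sum of the pants contributions $\eu(\rho_{|P})$.

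For each pants $P$ with boundary generators $\alpha,\beta,\gamma$ satisfying $\alpha\beta\gamma = 1$ (oriented as induced from $P$), pick lifts $\tilde\alpha,\tilde\beta,\tilde\gamma \in \HOZ$ of $\rho(\alpha),\rho(\beta),\rho(\gamma)$. Since their product projects to the identity in $\HOS$, it lies in the kernel $\Z$, so $\tilde\alpha\tilde\beta\tilde\gamma = T^{k_P}$ for some integer $k_P$, where $T$ denotes translation by one. Writing $\tilde\alpha\tilde\beta = T^{k_P}\tilde\gamma^{-1}$ and using that $\rotild$ is a homomorphism on the central $\Z$ and satisfies $\rotild(\tilde\gamma^{-1}) = -\rotild(\tilde\gamma)$, we get
\[ \eu(\rho_{|P}) = \tau(\rho(\alpha),\rho(\beta)) = \rotild(\tilde\alpha\tilde\beta) - \rotild(\tilde\alpha) - \rotild(\tilde\beta) = k_P - \rotild(\tilde\alpha) - \rotild(\tilde\beta) - \rotild(\tilde\gamma). \]

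Now summing over all pants, for each inner curve $\delta$ shared by two pants $P, P'$, we may choose the lift on $P'$ to be the inverse of the lift on $P$; then the two contributions $\rotild(\tilde\delta)$ and $\rotild(\tilde\delta^{-1}) = -\rotild(\tilde\delta)$ to the sum cancel. Only the cone-point boundaries survive, each appearing once. Choosing lifts $\tilde q_i$ with $\rotild(\tilde q_i) = \rot(\rho(q_i))$, collecting all the integers $k_P$ into a single $m \in \Z$, and absorbing the orientation sign by initially choosing generators oriented consistently with the $q_i$ in the presentation, yields the desired identity $\eu(\rho) = m + \sum_{i=1}^r \rot(\rho(q_i))$.

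The main obstacle is pure bookkeeping: setting up the pants decomposition so that the sign with which each $q_i$ enters matches the sign in the presentation, and verifying that inner-boundary contributions truly cancel after consistent choices of lifts. Once the orientation conventions are fixed at the outset, the cancellation is routine and the integer $m$ is simply the sum of the translation exponents $k_P$ together with the correction terms that arise when the chosen lifts on the two sides of an inner curve fail to be exact inverses.
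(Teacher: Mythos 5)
Your argument is correct and is essentially the justification the paper has in mind: the paper offers no written proof (it only remarks that the observation ``follows quickly from the definition'' of Burger--Iozzi--Wienhard), and your unwinding of the pants-decomposition definition, using the cocycle identity for $\rotild$ on each pair of pants and the cancellation of interior-curve contributions modulo $\Z$, is exactly that computation. The one point to keep straight is the orientation/sign convention matching the boundary orientations of the pants with the generators $q_i$ of the presentation, which you already flag and which at worst changes $\sum_i \rot(\rho(q_i))$ by an overall sign modulo $\Z$ --- harmless, since $m$ is an unspecified integer and the paper only invokes the statement to detect nonvanishing of the Euler number.
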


As such, there are situations when one can easily guarantee that the Euler
number of a representation is nonzero.
There are two specific examples of this which we will use in the sequel.

\begin{example} \label{ex:2222g}
The $(0; 2, 2, 2, 2g)$ orbifold group has presentation
\[ \langle a, b, c, d \mid a^2, b^2,  c^2,  d^{2g}, abcd\rangle \]
and has Euler characteristic $\frac{1-g}{2g}$.  
If this group acts on the circle by homeomorphisms, and the action of $d^2$ is nontrivial, 
then the Euler number of the action is of the form $k/2 + \rot(d)$.  As $d^2$ acts nontrivially and $d$ is finite order, we conclude that $\rot(d) \notin \{\frac{1}{2}, 0\}$, so the Euler number of the action is nonzero.
\end{example}

\begin{example} \label{ex:334}
The $(0;3,3,4)$ orbifold group has presentation 
\[ \langle a,b,c \mid a^3, b^3, c^4, abc\rangle, \]
and has Euler characteristic $\frac{-1}{12}$.
If it acts on the circle by homeomorphisms and the action of $c$ is nontrivial, then the Euler number of the action is of the form 
$k/3 + m/4$ for integers $k, m$ with $m \neq 0$~$\mathrm{mod}~4$,
hence it is nonzero.
\end{example} 

We conclude these preliminaries with a final (and well-known) ingredient
for our proof.

\begin{lemma}  \label{lem:torsionfree}
  Let $\Gamma$ be an orbifold group of signature $(g;m_1,\ldots,m_r)$,
  with the standard presentation given above, and let
  $\varphi\colon\Gamma \to G$ be a surjective homomorphism to a finite group.
  Suppose that each finite order generator $q_i$ of $\Gamma$ is mapped to an
  element of $G$ of order $m_i$.
  Then $\ker(\varphi)$ is the fundamental group of a compact surface of
  genus $g$ given by the formula $2-2g = \chi(\Gamma)\times|G|$.
\end{lemma}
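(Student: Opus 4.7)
The plan is to show that $\ker(\varphi)$ is torsion-free, conclude that it is a surface group since it is a torsion-free cocompact Fuchsian group, and then pin down the genus by using multiplicativity of the orbifold Euler characteristic under finite covers.

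First I would verify that $\ker(\varphi)$ is torsion-free. It is a standard fact about cocompact Fuchsian groups with the presentation above that every torsion element is conjugate in $\Gamma$ to a power of some generator $q_i$; geometrically, the torsion in $\Gamma$ comes precisely from stabilizers of cone points of the orbifold $\H^2/\Gamma$, and these are the cyclic groups $\langle q_i\rangle$ (up to conjugacy). Hence it suffices to check that no nontrivial power $q_i^k$ lies in $\ker(\varphi)$ for $1\le k\le m_i-1$. But by hypothesis $\varphi(q_i)$ has order exactly $m_i$ in $G$, so $\varphi(q_i^k)\neq 1$ for any such $k$. Therefore $\ker(\varphi)$ contains no nontrivial torsion.

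Next, $\ker(\varphi)$ is a subgroup of $\Gamma$ of index $|G|$, hence a cocompact Fuchsian group acting freely on $\H^2$. It follows that the quotient $\H^2/\ker(\varphi)$ is a closed orientable hyperbolic surface, so $\ker(\varphi)\cong\pi_1(\Sigma_{g'})$ for some $g'\ge 2$. Finally, by multiplicativity of the orbifold Euler characteristic under finite-index subgroups, recalled in the paragraph preceding Example~\ref{ex:2222g},
\[ \chi(\ker(\varphi)) \;=\; |G|\cdot \chi(\Gamma). \]
Since $\ker(\varphi)$ is a genuine surface group of genus $g'$, its orbifold Euler characteristic coincides with the ordinary one, $\chi(\ker(\varphi))=2-2g'$, yielding the stated formula.

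The only nontrivial step is the structural fact that every torsion element of $\Gamma$ is conjugate to a power of some $q_i$; I would simply cite the standard reference on Fuchsian groups (for instance Katok's book, already cited in the paper) rather than reprove it. Once that is in hand the remaining arguments are essentially bookkeeping with Euler characteristics.
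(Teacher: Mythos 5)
Your proof is correct and follows essentially the same route as the paper: cite the standard fact (Katok) that torsion elements of a cocompact Fuchsian group are conjugate to powers of the elliptic generators, use the order hypothesis (and normality of the kernel) to conclude torsion-freeness, note that a torsion-free finite-index subgroup of a cocompact Fuchsian group is a closed surface group, and finish with multiplicativity of the Euler characteristic. No gaps to report.
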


\begin{proof}
  It is a classical standard fact that
  all finite order (\textsl{i.e.}, elliptic) elements of $\Gamma$ are
  conjugate to some power of one of its finite order standard generators
  (see eg~\cite[Th.~3.5.2]{Katok}). Hence, it
  follows from the asumption that $\ker(\varphi)$ is torsion free.
  As $\Gamma$ is cocompact, and $\ker(\varphi)$ is finite index
  (of index $|G|$), the group $\ker(\varphi)$ is cocompact as well,
  hence it is a surface group.
  The genus calculation follows from multiplicativity of Euler characteristic
  under covers.
\end{proof}

\subsection{Finishing the proof}
We now apply the framework above to our situation, finding normal genus $g$
surface subgroups inside of the orbifold groups given in
Examples~\ref{ex:2222g} and~\ref{ex:334}, and use this to conclude our proof.

Consider first the group given in Example~\ref{ex:2222g}, which has an
(equivalent) presentation
$\langle a, b, c \mid a^2, b^2, c^2, (abc)^{2g} \rangle$.   
Define a surjective homomorphism from this group to the dihedral group
$\langle r, s \mid r^{2g}, s^2,  srsr\rangle$ of order $4g$ by
\begin{align*}
& a \mapsto r^g, \\
& b \mapsto sr \\ 
& c \mapsto sr^{2-g},
\end{align*}
so $abc \mapsto r$.
Since the standard, finite order generators $a, b, c$ and $d := (abc)^{-1}$
map to elements of their respective orders, Lemma~\ref{lem:torsionfree}
states that the kernel $K$ of this morphism is a torsion free subgroup of
index $4g$, and the Euler characteristic of the regular cover corresponding
to the kernel is $4g(\frac{1-g}{2g}) = 2-2g$. Thus, we obtain $\Sigma_g$ as
a regular cover of the $(0; 2, 2, 2, 2g)$ orbifold.

Recall that $\rho$ is assumed to be an action of $\Aut_+(\Gamma_g)$
on $S^1$. By Observation~\ref{obs:normalizer}, the $(0; 2, 2, 2, 2g)$
orbifold group embeds in $\Aut_+(\Gamma_g)$, with $\Inn_g \cong \Gamma_g$
agreeing with the kernel $K$ of the homomorphism defined above.
It follows from Example~\ref{ex:2222g} and multiplicativity of the Euler
number that if $\rho(d)$ has order greater than 2, then restriction of $\rho$
to $K$ also has nonzero Euler number. Thus, by Lemma~\ref{lem:rot0}, the
Euler number of the restriction of $\rho$ this subgroup equals $\pm(2g-2)$,
so by Lemma~\ref{lem:Matsumoto} $\rho$ agrees with the standard action.

Thus, we have proved Theorem~\ref{thm:main} under the additional hypothesis
that $\rho(d)^2 \neq \id$. To remove this hypothesis, we use recent work of
Lanier and Margalit on normally generating mapping class groups.

\begin{theorem}[Lanier--Margalit \cite{LM}] \label{thm:LM}
Let $g \geq 2$. Then every nontrivial, periodic mapping class that is not a
hyperelliptic involution normally generates the commutator subgroup of $\Mod_g$.
\end{theorem}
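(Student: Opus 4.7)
The plan is to show that the normal closure $N := \langle\!\langle f\rangle\!\rangle$ of a nontrivial, non-hyperelliptic periodic $f \in \Mod_g$ contains a Dehn twist $T_c$ about some nonseparating simple closed curve. Once this is known, Lickorish's theorem tells us that such Dehn twists normally generate the commutator subgroup, so $[\Mod_g,\Mod_g]\subseteq N$. For $g\geq 3$ this suffices since $\Mod_g$ is perfect (Powell); for $g=2$ one additionally checks that $f\in[\Mod_g,\Mod_g]$ directly from the $\Z/10\Z$ abelianization of $\Mod_2$.

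To produce such a curve, I would first apply Nielsen realization (Kerckhoff) to represent $f$ as an isometry $\varphi$ of some hyperbolic structure on $\Sigma_g$, and then study the quotient $2$-orbifold $\mathcal{O}=\Sigma_g/\langle\varphi\rangle$ with its signature $(h;m_1,\ldots,m_r)$. The hyperelliptic involution corresponds precisely to the signature $(0;2,\ldots,2)$ with $2g+2$ order-two cone points; having excluded this, the classification of 2-orbifold signatures that cover $\Sigma_g$ provides a short arc on $\mathcal{O}$ (for example, joining two cone points of different orders, or an essential loop avoiding cone points) whose preimage in $\Sigma_g$ contains an essential simple closed curve $c$ with $c\not\simeq\varphi(c)$ and $i(c,\varphi(c))\leq 2$.

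Given such a $c$, the commutator $[f,T_c] = T_{\varphi(c)}T_c^{-1}$ lies in $N$. In the case $i(c,\varphi(c))=1$, the pair $(T_c,T_{\varphi(c)})$ satisfies the braid relation, and by the chain relation $(T_cT_{\varphi(c)})^6$ equals the Dehn twist about the boundary of a regular neighborhood of $c\cup\varphi(c)$ (a one-holed torus), directly producing an individual Dehn twist in $N$. When $i(c,\varphi(c))=0$, I would combine the elements $T_{\varphi^k(c)}T_c^{-1}\in N$ for various $k$ with the lantern relation applied to a pants decomposition of the subsurface cobounded by $c,\varphi(c),\ldots,\varphi^{n-1}(c)$, isolating a single nonseparating $T_d$.

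The main obstacle, I expect, is the curve-finding step: one must verify that only the hyperelliptic involution fixes the isotopy class of every unoriented simple closed curve, so that for any other non-trivial periodic $\varphi$ a suitable displacement curve $c$ exists, and one must control $i(c,\varphi(c))$ tightly enough that the twist-extraction step succeeds. This reduces to a case-by-case analysis over the finite list of signatures of finite subgroups of $\Mod_g$, using the explicit geometry of the branched covers $\Sigma_g\to\mathcal{O}$ to exhibit the required arcs, and separate attention to small-order elements (such as hyperelliptic rotations of order $2g+1$ or $2g+2$) where the orbifold $\mathcal{O}$ is itself a sphere with few cone points.
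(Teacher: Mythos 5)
Your central objective---showing that the normal closure $N=\langle\!\langle f\rangle\!\rangle$ contains a Dehn twist about a nonseparating curve---cannot be achieved in exactly the case this paper needs the theorem for. In genus $2$ the abelianization of $\Mod_2$ is $\Z/10\Z$ and a nonseparating twist maps to a generator, while the image of $N$ in $\Z/10\Z$ is the cyclic subgroup generated by the image of $f$; for a periodic $f$ of order $3$ or $4$ (for instance the order-$4$ element $ab$ of the $(0;3,3,4)$ group used in this paper) that subgroup has order at most $2$, so no nonseparating twist can lie in $N$. Your own remark that for $g=2$ one should check $f\in[\Mod_2,\Mod_2]$ makes the tension visible: if $f$ is in the commutator subgroup then so is all of $N$, which excludes any full twist. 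This is precisely why Lanier--Margalit (and the sketch quoted in this paper) aim at a \emph{difference} of twists $\tau_\alpha\tau_\beta^{-1}$ with $i(\alpha,\beta)=1$: such an element lies in the commutator subgroup, and its normal closure is the commutator subgroup (the whole group when $g\geq 3$, where $\Mod_g$ is perfect), so the statement follows uniformly for $g\geq 2$ with no homological obstruction.

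The twist-extraction mechanics are also flawed as written. From $[f,T_c]=T_{\varphi(c)}T_c^{-1}\in N$ you cannot invoke the chain relation $(T_cT_{\varphi(c)})^6=T_\partial$, because that requires the \emph{product} $T_cT_{\varphi(c)}$ to lie in $N$, which does not follow from membership of the difference (and is homologically impossible in the genus-$2$ situation above); the lantern manipulation in the disjoint case has the same problem, since it must again convert differences of twists into a single twist. The viable repair is the route the paper attributes to Lanier--Margalit: use the quotient-orbifold analysis (your curve-finding step is in this spirit, though you need to reach a pair with intersection number exactly one, not merely $i(c,\varphi(c))\leq 2$) to place some $\tau_\alpha\tau_\beta^{-1}$, $i(\alpha,\beta)=1$, inside $N$, and then cite or prove that the normal closure of such an element contains $[\Mod_g,\Mod_g]$. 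Note finally that the paper itself does not reprove this theorem; it quotes it from Lanier--Margalit and only verifies that the key lemma carries over to $\Mod_{g,1}$, so any self-contained proof along your lines would in any case have to be rebuilt around the difference-of-twists element rather than a full twist.
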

Recall that the abelianization of $\Mod_g$ is trivial if $g\geq 3$, and is
$\Z/10\Z$ if $g=2$ (see \textsl{e.g.}~\cite{Korkmaz}); hence these
periodic mapping classes normally generate $\Mod_g$ if~$g\geq 3$.

The key step of Lanier--Margalit's proof is as follows. Given any such
periodic mapping class $f$, they find simple closed curves $\alpha$ and
$\beta$ that intersect once, such that the product of Dehn twists
$\tau_{\alpha}\tau_{\beta}^{-1}$ lies in the normal closure of $f$.
(See~\cite[Lemma~2.3]{LM}.)
This step can be carried out in exactly the same way in the group
$\Mod_{g,1}$.  
Since such elements $\tau_{\alpha}\tau_{\beta}^{-1}$
also generate the commutator subgroup of $\Mod_{g,1}$,
the proof goes through verbatim
and the conclusion of Theorem~\ref{thm:LM} holds in this case as well. 

Using this result, we may now quickly conclude our proof in the case of
genus $g \geq 3$. In this case, the element $d^2$ has order $g \geq 3$
in $\Mod_{g,1} \cong \Aut_+(\Gamma_g)$,
so is not a hyperelliptic involution. Thus, if $\rho(d)^2$ is trivial,
the normal closure of $d^2$ is in the kernel of $\rho$ as well, so by
Theorem~\ref{thm:LM} $\rho$ is trivial.

For the case of genus $2$, the element $d^2$ \emph{is} the hyperelliptic
involution of $\Sigma_2$, so the argument above does not immediately apply.
So we work instead with the group
$\langle a,b \mid a^3, b^3, (ab)^4 \rangle$
from Example~\ref{ex:334}, following the same strategy.  
Define a homomorphism $\varphi$ from this group
to the finite group $\SL_2(\F_3)$ by
$\varphi(a) = \left(\begin{smallmatrix}1 & 1 \\ 0 & 1\end{smallmatrix}\right)$; $\varphi(b) = \left(\begin{smallmatrix}1 & 0 \\ 1 & 1\end{smallmatrix}\right)$.
This morphism is easily seen to be surjective, as $\varphi(a)$ and $\varphi(b)$
are images of standard generators of $\SL_2(\Z)$ under the natural map to
$\SL_2(\Z/3\Z)$. Lemma~\ref{lem:torsionfree} again implies that the kernel
of $\varphi$ is a torsion free subgroup, hence the fundamental group of a
surface. Since the Euler characteristic of the $(0; 3,3,4)$ orbifold is
$\frac{-1}{12}$, and $|\SL_2(\F_3)| = 24$, this surface has genus 2, so by
Observation~\ref{obs:normalizer}, we can identify the $(0; 3,3,4)$ group
with a subgroup of~$\Aut_+(\Gamma_2)$.

Example~\ref{ex:334} now implies that, if $\rho\colon \Aut_+(\Gamma_2)\to\HOS$
is such that the Euler number of the restriction to the $(0; 3,3,4)$ group
is zero, then $\rho(ab) = \id$.  Since $ab$ has order 4, it is not the
hyperelliptic involution, so Theorem~\ref{thm:LM} implies that
kernel of $\rho$ contains the commutator subgroup of $\Aut_+(\Gamma_2)$,
hence $\rho$ factors through its abelianization, $\Z/10\Z$.
This completes the proof.
\qed


\section{Concluding remarks}  \label{sec:conclude}

\begin{proof}[Proof of Corollary \ref{cor:Parwani}]
Let $\rho: \Aut_+(\Sigma_g) \to \Diff^1_+(S^1)$, where $g \geq 3$.
By Theorem~\ref{thm:main}, $\rho$ is either trivial or is semi-conjugate
to the standard action (up to reversing orientation).

Suppose for contradiction that $\rho$ is a $C^1$ action that is semi-conjugate to the
standard action.
Let $\gamma \in \Gamma_g$ be an element represented by a separating simple
closed curve $c$ on $\Sigma_g$, so that one connected component of
$\Sigma \smallsetminus c$ has genus $h \geq 2$. Then the stabilizer of
$\gamma$ in $\Aut_+(\Sigma_g)$ contains a copy of $\Mod_{h}^1$, the
mapping class group of the genus $h$ surface with one boundary component.
Since the standard boundary action restricted to this subgroup has a global
fixed point, and since the property of having a global fixed point is
preserved under semi-conjugacy, it follows that $\rho(\Mod_{h}^1)$ acts on
$S^1$ with a fixed point. Let $x$ be a point on the boundary of the fixed
set of $\rho(\Mod_{h}^1)$.
Since $\Mod_{h}^1$ has trivial abelianization (see e.g. \cite{Korkmaz}), the linear representation obtained by taking derivatives at $x$ is trivial.    
However, the {\em Thurston stability theorem} \cite{ThurstonStability} states that, if $G$ is any finitely generated, nontrivial group of germs of $C^1$ diffeomorphisms at a point of a manifold, with trivial linear part, then $G$ admits a surjective morphism to $\Z$.  It follows that $\Mod_{h}^1$ (via its image in the group of germs of diffeomorphisms at $x$) has a nontrivial morphism to $\Z$, contradicting the fact that its abelianization is trivial.  
\end{proof}

\begin{remark}
Our proof of Theorem \ref{thm:main} relied heavily on torsion elements, so does not generalize to finite index subgroups of $\Aut_+(\Gamma_g)$.
However, quotients of finite index subgroups of $\Mod_{g,1}$ are not well
understood, so one does not expect an analogous result to follow along the
same lines. For example, it is a long standing question -- or perhaps conjecture -- of Ivanov~\cite{Ivanov} whether all finite index subgroups of mapping
class groups have trivial abelianization. (See~\cite{Margalit} for a discussion on the current status of the problem.)
There are many non semi-conjugate actions of $\Z^d$ on the circle,
for any $d \geq 1$; for example, one may take representations into $\SO(2)$, then their (semi)-conjuagcy classes 
are distinguished by rotation numbers.  Thus, any subgroup $\Gamma \subset \Aut_+(\Gamma_g)$ with
$H^1(\Gamma, \Z) \neq 0$ would have many nontrivial and non-semi-conjugate
actions on the circle. 
\end{remark}

Given the remark above the relevant remaining question is as follows. 

\begin{question}
Is every faithful action of a finite-index subgroup of $\Aut_+(\Gamma_g)$
on $S^1$  semi-conjugate to the
standard action? 
\end{question}

We hope to address this in future work.

\begin{acknowledge}
The authors thank the Universidad de la Rep\'ublica in Montevideo, Uruguay,
for the hospitality during the workshop on Groups, Geometry and Dynamics
in July 2018.  We are grateful to B. Farb and J. Lanier for helpful comments, and 
Lanier for explaining that the proof of Theorem~\ref{thm:LM} generalizes to surfaces with a marked point.
K.~Mann was partially supported by NSF grant DMS-1606254.
\end{acknowledge}

\bibliographystyle{plain}

\bibliography{Biblio}

\end{document}